\newtheorem{theorem}{Theorem}[section]
\newtheorem{claim}[theorem]{Claim}
\newtheorem{definition}[theorem]{Definition}
\newcommand{\vc}{\|\cdot\|}
\newcommand{\C}{\mathbb{C}}
\newcommand{\Z}{\mathbb{Z}}
\newcommand{\s}{\mathbb{S}^1}
\newcommand{\R}{\mathbb{R}}
\newcommand{\p}{\mathbb{P}}
\newcommand{\Q}{\mathbb{Q}}
\newcommand{\N}{\mathbb{N}}
\title{On an arithmetic inequality on $\p^1_\Q$}
\date{\today, \currenttime}
\author{Mounir Hajli\footnote{\small National Center for Theoretical Sciences (Taipei Office) National Taiwan University,
Taipei 106, Taiwan \quad\quad\quad \emph{E-mail}:\ttfamily{hajlimounir@gmail.com}}}
\begin{document}

\maketitle
\begin{abstract}
We establish an inequality comparing the height and the $\chi$-arithmetic volume of toric metrized divisors on
$\p^1_\Q$. This gives a partial
 answer to a question of Burgos, Moriwaki, Philippon and Sombra 
(\cite[remark 5.13]{Burgos3}).\\
\end{abstract}

In \cite[remark 5.13]{Burgos3} the authors ask if the following inequality
\[
h_{\overline{D}}(X)\leq \widehat{\mathrm{vol}}_\chi(X,\overline{D})
\]
holds for  any toric DSP metrized $\R$-divisor $\overline{D}
$ on $X=\p^1_\mathbb{K}$,  where  $\mathbb{K}$  
 is a global field, $h_{\overline{D}}(X)$ is
the height of $X$  and $\widehat{\mathrm{vol}}_\chi(X,\overline{D})$ is
$\chi$-arithmetic volume with respect to $\overline{D}$.

 In this note we give an affirmative answer to this question when
$\mathbb{K}=\Q$,
 $D$ is nef and $\overline{D}$ is a toric DSP metrized 
divisor  such that the metric  on all non-archimedean
places is the canonical metric  (see theorem
\eqref{y1}).\\

 Let $L$ be a line bundle on $\p^1(\C)$. A metric 
$\vc$ on $L$ is semipositive if it is the uniform limit
of a sequence of semipositive smooth metrics. 
The metric $\vc$ is DSP if it is
the quotient of two semipositives ones. We denote by
$\mathcal{M}_\Q$ the set of places of $\Q$. For 
any $v\in \mathcal{M}_\Q$, we denote by 
$\p^{1, an}_v$ the $v$-adic analytification of $\p^1_\Q$.
Similarly  a line bundle $L$ on $\p^1_\Q$ defines 
a collection of analytic line bundles $\{L_v^{an}\}_{v\in 
\mathcal{M}_\Q}$, see   \cite[\S 3]{Burgos3} 
for more details.

\begin{definition}\label{z3}
A metrized divisor on $\p^1_\Q$ is a pair 
$\overline{D}=(D,(\vc_v)_{v\in \mathcal{M}_\Q})$ formed by a
divisor $D$ with  $\vc_\infty $ is 
a continuous hermitian
 metric  on $\mathcal{O}(D)_\infty$ and
$\vc_v$ is the canonical metric of $\mathcal{O}(D)_v$ 
for $v$ a non-archimedean place. We say that
 $\overline{D}$ is
smooth or semipositive if so is the metric $\vc_\infty
$.  We say 
that $\overline{D}$ is a DSP divisor if it is
 the difference of two semipositive divisors.
 The Green function
of $\overline{D}$ is the function $g_{\overline{D}}:
\p^1(\C)\setminus |D|
\rightarrow \R$ given by
\[
g_{\overline{D}}(p)=-\log \|s_D(p)\|_\infty,
\]
where $s_D$ is the canonical section of $\mathcal{O}(D)$.
\end{definition}

Let $\overline{D}$ be a metrized DSP
 divisor on $\p^1_\Q$ 
as in \eqref{z3}.  We suppose that $\overline{D}$ is
toric, see \cite[\S.4]{Burgos3}. This means that
$D$ is a toric divisor and 
  $\vc_\infty$ is invariant
under the action of $\s$ the compact torus of $\p^1(\C)$ 
(see \cite[definition 4.12]{Burgos3} and 
\cite[proposition 4.16]{Burgos3}). In the sequel,  we assume 
that $\overline{D}$ satisfies these hypothesis and 
$D$ is nef.

\begin{theorem}\label{y1} Under the previous hypothesis, 
we have \[
h_{\overline{D}}(\p^1_\Q)\leq \widehat{\mathrm{vol}}_\chi(\p^1_\Q,\overline{D}).
\]

\end{theorem}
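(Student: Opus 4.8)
The plan is to reduce both sides of the inequality to explicit integrals over the space of characters, using the toric dictionary that translates metrized divisors into concave functions, and then compare these integrals directly.
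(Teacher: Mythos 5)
Your plan stops exactly where the real difficulty begins. The toric dictionary does reduce the right-hand side to an explicit integral: $\widehat{\mathrm{vol}}_\chi(\p^1_\Q,\overline{D})=2\int_{\Delta_D}\vartheta_{\overline{D}}\,d\mathrm{vol}_\R$, where $\vartheta_{\overline{D}}$ is the roof function. But the left-hand side does \emph{not} admit a comparable formula when $\overline{D}$ is only DSP (or smooth but not semipositive): the identity $h_{\overline{D}}(\p^1_\Q)=2\int_{\Delta_D}\vartheta_{\overline{D}}\,d\mathrm{vol}_\R$ is a theorem of Burgos, Philippon and Sombra for \emph{semipositive} toric metrics, and for a general DSP toric metric the height is a signed combination of mixed integrals whose direct comparison with the roof-function integral is precisely the open question of \cite[remark 5.13]{Burgos3} that the theorem addresses. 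So ``compare these integrals directly'' is not a proof step; it is a restatement of the problem, and you have not said how the comparison would actually be carried out.

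The paper's route supplies the missing idea: replace the archimedean weight $g_{\overline{D}}$ by its plurisubharmonic envelope $Pg_{\overline{D}}$, giving a new metrized divisor $\overline{D}_P$ which is checked to be semipositive and still toric. Two facts then close the argument. First, $h_{\overline{D}}(\p^1_\Q)\leq h_{\overline{D}_P}(\p^1_\Q)$; this follows from the height-difference formula of Bost--Gillet--Soul\'e/Zhang together with an integration-by-parts inequality special to dimension one, resting on the Berman--Boucksom orthogonality relation $\int_X(P_X\phi-\phi)\,\mathrm{MA}(P_X\phi)=0$. Second, taking the envelope changes neither the sup-norms of global sections (again Berman--Boucksom) and hence not the roof function, so $\widehat{\mathrm{vol}}_\chi(\p^1_\Q,\overline{D}_P)=\widehat{\mathrm{vol}}_\chi(\p^1_\Q,\overline{D})$, while for the semipositive $\overline{D}_P$ one has $h_{\overline{D}_P}=\widehat{\mathrm{vol}}_\chi(\overline{D}_P)$ by \cite[corollary 5.8]{Burgos3}. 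A final uniform-approximation step handles the general DSP case from the smooth one. None of these ingredients --- the envelope, the height comparison, the invariance of the roof function under the envelope, the limiting argument --- appears in your proposal, so as it stands there is a genuine gap rather than an alternative proof.
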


In order to prove this theorem, we assume first that $\overline{D}$ is smooth. By definition,
$g_{\overline{D}}$ is
 a smooth weight of $\vc_\infty$. We denote by $P g_{\overline{D}}$ the equilibrium weight of $g_{\overline{D}}$
 (see the appendix) instead of $P_{\p^1} g_{\overline{D}}$ and by
 $\vc_P$ the hermitian metric
defined by $P g_{\overline{D}}$ and we denote by
 $\overline{D}_P$ the metrized divisor $D$ endowed
 with the metric $\vc_P$ on the archimedean place and
 with the canonical metric on all non-archimedean
 places.
\begin{claim}
$\overline{D}_P$ is a semipositive toric divisor.
\end{claim}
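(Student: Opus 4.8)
The plan is to use the two defining features of the equilibrium weight $P g_{\overline{D}}$ recalled in the appendix: it is the upper semicontinuous regularization of the upper envelope of all semipositive weights of $\mathcal{O}(D)_\infty$ that are bounded above by $g_{\overline{D}}$, and, because $D$ is nef (so $\deg D \geq 0$), this family is non-empty and the resulting envelope is an honest semipositive weight. First I would record that $\vc_P$ is therefore a continuous semipositive metric on $\mathcal{O}(D)_\infty$. Since by construction the metrics of $\overline{D}_P$ at the non-archimedean places are the canonical metrics, which are semipositive, and since semipositivity of a metrized divisor in the sense of Definition \eqref{z3} is a condition on the archimedean metric $\vc_\infty$, it follows that $\overline{D}_P$ is semipositive.

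It then remains to prove that $\overline{D}_P$ is toric, that is, that $\vc_P$ is invariant under the action of $\s$. Here I would exploit the naturality of the envelope construction: each $\sigma \in \s$ acts on $\p^1(\C)$ by the restriction of a holomorphic automorphism, hence pulls back semipositive weights to semipositive weights. As $g_{\overline{D}}$ is $\s$-invariant by hypothesis, the family of semipositive weights dominated by $g_{\overline{D}}$ is stable under pullback by $\s$, and therefore so is its upper envelope together with its upper semicontinuous regularization. Consequently $P g_{\overline{D}}$ is $\s$-invariant, $\vc_P$ is $\s$-invariant, and, $D$ being a toric divisor, $\overline{D}_P$ is toric.

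An equivalent and arguably cleaner route, which settles both points at once, is to pass through the toric dictionary of \cite{Burgos3}: the $\s$-invariant continuous weight $g_{\overline{D}}$ corresponds to a continuous function on $\R$, semipositivity translating (in the appropriate convention) into concavity via Legendre--Fenchel duality on the segment $\Delta_D$, and the equilibrium weight $P g_{\overline{D}}$ corresponds precisely to the concave envelope of that function, obtained by biconjugation. This envelope is automatically concave and manifestly $\s$-invariant, which yields semipositivity and toricity simultaneously.

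The step I expect to be the main obstacle is justifying that the envelope is a genuine semipositive weight rather than merely an upper semicontinuous one: one must know that $P g_{\overline{D}}$ is continuous and that its curvature current is positive. In the general pluripotential setting this rests on the regularity theory of envelopes for nef line bundles, but in our one-dimensional toric situation it reduces to the elementary fact that the concave envelope of a continuous function on $\R$ is again continuous and concave, so the difficulty is fully controlled by the hypothesis $\deg D \geq 0$.
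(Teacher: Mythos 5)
Your main argument coincides with the paper's: semipositivity of $\vc_P$ comes from $Pg_{\overline{D}}$ being a continuous psh weight (the paper cites Berman for continuity and Maillot/Zhang to pass from a semipositive Chern current to a semipositive metric in the sense of Definition \eqref{z3}), and toricity comes from the $\s$-invariance of the family of competitors in the envelope, which the paper spells out explicitly via $g_\theta(z)=g(\theta\cdot z)$. Your alternative route through the Legendre--Fenchel/concave-envelope dictionary is a reasonable extra, but the core reasoning is essentially the same as the paper's.
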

\begin{proof}
By definition $Pg_{\overline{D}}$ is a psh weight on $\mathcal{O}(D)_\infty$ and we know that $\vc_P$ is a continuous 
metric (see for instance \cite[\S 1.4, before (1.8)]{Berman}). Then the Chern current
$c_1((\mathcal{O}(D),P\vc))$ is semipositive\footnote{that is $c_1((\mathcal{O}(D),P_X\vc))\geq 0$}. By
\cite[theorem 4.6.1]{Maillot}\footnote{Notice that a semipositive metric as in \eqref{z3}
corresponds to the notion of admissible metric in \cite{Zhang} and in \cite[4.5.5]{Maillot}}, $\vc_P$ is a semipositive
metric.

 Let
$g$ be a psh weight function on $\mathcal{O}(D)_\infty$ with $g\leq g_{\overline{D}}$. Let $\theta\in \mathbb{S}^1$.
 We set
$g_\theta$ the  function given by $g_\theta(z)=g(\theta\cdot z)$ for any
$z\in \p^1(\C)$.
 Then $g_\theta$ is clearly a psh weight on $\mathcal{O}(D)_\infty$. We have $g_\theta(z)=g(\theta\cdot z)\leq g_{\overline{D}}(\theta\cdot
z)=g_{\overline{D}}(z)$, $\forall z\in \p^1(\C)$.
Then, $g_\theta(z)\leq Pg_{\overline{D}}(z)$, $\forall z\in \p^1(\C)$. Therefore,
$Pg_{\overline{D}}
(\theta\cdot z)\leq Pg_{\overline{D}}(z)$, $\forall\, \theta\in
\mathbb{S}^1, \forall\, z\in \p^1(\C)$. We conclude that
\[
Pg_{\overline{D}}(\theta\cdot z)=Pg_{\overline{D}}
(z)\quad\forall\,\theta\in \mathbb{S}^1,\,\forall \,z\in \p^1(\C).
\]
Which means  that $\vc_P$ is an invariant metric. We conclude that $\overline{D}_P$ is a  semipositive toric divisor
on $\p^1_\Q$.
\end{proof}

Recall that if $\overline{D}':=
(D, (\vc'_v)_{v\in \mathcal{M}_\Q})$ is a smooth metrized 
divisor as in \eqref{z3}, then by 
\cite[proposition 3.2.2]{BoGS}, we have 
\[
h_{\overline{D}}(\p^1_\Q)-h_{\overline{D}'}(\p^1_\Q)
=-\int_X\bigl(g_{\overline{D}}-g_{\overline{D}'}\bigr)\bigl(
c_1(\mathcal{O}(D), \vc)
+c_1 (\mathcal{O}(D),\vc')\bigr).\]
By \cite{Zhang}, one can  extend this equality 
to the case of  DSP divisor $\overline{D}'$,  and we have 
\[
h_{\overline{D}}(\p^1_\Q)-h_{\overline{D}'}(\p^1_\Q)
=-\int_X\bigl(g_{\overline{D}}-g_{\overline{D}'}\bigr)\bigl(
c_1(\mathcal{O}(D), \vc)
+c_1 (\mathcal{O}(D),\vc')\bigr),\]
where $c_1 (\mathcal{O}(D),\vc')$ is the first 
Chern current
of $(\mathcal{O}(D),\vc')$.\\

Since $\overline{D}_P$ is semipositive, then the previous
equality gives
\[
h_{\overline{D}}(\p^1_\Q)-h_{\overline{D}_P}(\p^1_\Q)=-\int_X\bigl(g_{\overline{D}}-g_{\overline{D}_P}\bigr)\bigl(
c_1(\mathcal{O}(D), \vc)
+c_1 (\mathcal{O}(D),\vc_P)\bigr).
\]
From \eqref{z2}, we have
\begin{equation}\label{z4}
h_{\overline{D}}(\p^1_\Q)\leq h_{\overline{D}_P}(\p^1_\Q).
\end{equation}
Since $\overline{D}_P$ is a semipositive toric divisor, then by \cite[corollary 5.8]{Burgos3}
\begin{equation}\label{z6}
h_{\overline{D}_P}(\p^1_\Q)=\widehat{\mathrm{vol}}_\chi(\p^1_\Q,\overline{D}_P),
\end{equation}
and by \cite[theorem 5.6]{Burgos3}, we have
\[
\widehat{\mathrm{vol}}_\chi(\p^1_\Q,\overline{D}_P)=2\int_{\Delta_D}
\vartheta_{\overline{D}_P}d\mathrm{vol}_\R,
\]
where $\vartheta_{\overline{D}_P}$ is the roof function
associated to $\overline{D}_P$ (see \cite[definition 4.17]{Burgos3}).

\begin{claim}\label{z7}
We have,
\[
\vartheta_{\overline{D}_P}=\vartheta_{\overline{D}},
\]
on $\Delta_D$.
\end{claim}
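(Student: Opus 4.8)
The plan is to pass to the combinatorial description of toric metrics and reduce the identity to the involutivity of the Legendre--Fenchel transform. First I would invoke the dictionary of \cite[\S 4]{Burgos3}: the archimedean toric metric $\vc_\infty$ of $\overline{D}$ is encoded by a function $\psi_{\overline{D}}$ on $N_\R=\R$ with $\psi_{\overline{D}}-\Psi_{\Delta_D}$ bounded, where $\Psi_{\Delta_D}$ is the support function of the polytope $\Delta_D$; semipositive metrics correspond to concave functions, and the roof function is the Legendre--Fenchel dual, $\vartheta_{\overline{D}}=\psi_{\overline{D}}^{\vee}$ on $\Delta_D$ (\cite[definition 4.17]{Burgos3}). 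Writing $\psi_{\overline{D}_P}$ for the function attached to $\vc_P$, the claim becomes the equality of concave conjugates $\psi_{\overline{D}_P}^{\vee}=\psi_{\overline{D}}^{\vee}$.

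The key step is to identify $\psi_{\overline{D}_P}$ with the concave hull of $\psi_{\overline{D}}$. By construction $g_{\overline{D}_P}=Pg_{\overline{D}}$ is the supremum of the semipositive weights dominated by $g_{\overline{D}}$ (see the appendix). Under the dictionary this correspondence is order-reversing, so these weights match the concave functions on $\R$ lying above $\psi_{\overline{D}}$; their pointwise infimum is again concave (the hypograph being an intersection of convex sets) and is the smallest concave majorant, i.e. the concave hull $\widehat{\psi}_{\overline{D}}$. Thus $\psi_{\overline{D}_P}=\widehat{\psi}_{\overline{D}}=\psi_{\overline{D}}^{\vee\vee}$, the Legendre biconjugate; this is consistent with the already established fact that $\overline{D}_P$ is semipositive and toric, since $\psi_{\overline{D}}^{\vee\vee}$ is concave and shares the recession function $\Psi_{\Delta_D}$ with $\psi_{\overline{D}}$.

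It then remains to apply the biconjugation theorem: for any function admitting a concave majorant one has $\psi^{\vee\vee\vee}=\psi^{\vee}$, a function and its concave hull having the same conjugate. Hence $\vartheta_{\overline{D}_P}=\psi_{\overline{D}_P}^{\vee}=\bigl(\psi_{\overline{D}}^{\vee\vee}\bigr)^{\vee}=\psi_{\overline{D}}^{\vee}=\vartheta_{\overline{D}}$, and since both conjugates are finite exactly on $\Delta_D$ the equality holds on all of $\Delta_D$, as claimed. The argument is insensitive to smoothness, so the reduction to the smooth case made above is harmless and the same computation will cover the DSP case after the usual approximation.

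I expect the main obstacle to be the precise justification of $\psi_{\overline{D}_P}=\psi_{\overline{D}}^{\vee\vee}$: one must check that the equilibrium weight of the appendix, read through the toric dictionary of \cite[\S 4]{Burgos3}, really produces the concave hull and not merely some concave function $\ge\psi_{\overline{D}}$. Everything after this identification is the formal triple-duality $\psi^{\vee\vee\vee}=\psi^{\vee}$ and is routine.
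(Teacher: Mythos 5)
Your proof is correct in substance but takes a genuinely different route from the paper's. The paper never computes $\psi_{\overline{D}_P}$ at all: it combines \cite[proposition 2.8]{BermanBoucksom} --- the equilibrium envelope leaves the sup norms $\sup_{\p^1}\|s\|_{k\overline{D}}$ of global sections unchanged --- with \cite[proposition 5.1]{Burgos3}, which evaluates these sup norms on monomial sections as $\exp(-k\vartheta_{\overline{D}}(\frac{m}{k}))$; the two roof functions then agree at the rational points of $\Delta_D$ and hence everywhere by continuity and density. You instead stay entirely on the convex-analytic side, identifying $\psi_{\overline{D}_P}$ with the closed concave hull $\psi_{\overline{D}}^{\vee\vee}$ and concluding by the triple-duality identity $\psi^{\vee\vee\vee}=\psi^{\vee}$. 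The two arguments are essentially dual to one another --- in the toric setting the invariance of sup norms under the envelope \emph{is} the identity $(\psi^{\vee\vee})^{\vee}=\psi^{\vee}$ read through \cite[proposition 5.1]{Burgos3} --- so your version buys independence from \cite[proposition 2.8]{BermanBoucksom} at the cost of the identification $\psi_{\overline{D}_P}=\psi_{\overline{D}}^{\vee\vee}$, which you correctly isolate as the one point needing proof. To close it, two things you leave implicit should be made explicit: first, $Pg_{\overline{D}}$ is a supremum over \emph{all} psh weights, not only the toric ones, so you must invoke the paper's first Claim (the $\s$-invariance and continuity of $Pg_{\overline{D}}$) before reading the envelope through the dictionary of \cite[\S 4]{Burgos3}; second, granting the order-reversing correspondence between continuous semipositive toric metrics and concave functions at bounded distance from $\Psi_{\Delta_D}$, the identification follows from a two-sided comparison: $\psi_{\overline{D}}^{\vee\vee}$ is at bounded distance from $\Psi_{\Delta_D}$ (since $\Psi_{\Delta_D}+C$ is a concave majorant of $\psi_{\overline{D}}$) and therefore corresponds to a psh weight below $g_{\overline{D}}$, hence below $Pg_{\overline{D}}$, giving $\psi_{\overline{D}}^{\vee\vee}\geq\psi_{\overline{D}_P}$, while $\psi_{\overline{D}_P}$ is itself a closed concave majorant of $\psi_{\overline{D}}$, giving the reverse inequality. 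With these two points supplied your argument is complete and establishes the claim.
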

\begin{proof}
This is an easy consequence of the  combination
 of \cite[proposition 5.1 (1)]{Burgos3} and
\cite[proposition 2.8]{BermanBoucksom}. Indeed, by \cite[proposition 2.8]{BermanBoucksom} we have 
$\sup_{\p^1}\|s\|_{k\overline{D}}=\sup_{\p^1}\|s\|_{k\overline{D}_P}$ for any $s\in H^0(\p^1,\mathcal{O}(kD))$ and $k\in
\N^\ast$. But,
we know that $\sup_{\p^1}\|s_m\|=\exp(-k\vartheta_{\overline{D}}(\frac{m}{k}))$ where $s_m$ is the global
section of $\mathcal{O}(kD)$ corresponding to $m\in k\Delta_D\cap \Z$ (see for instance
 \cite[proposition 5.1]{Burgos3})). By continuity and density arguments we deduce the equality of the claim.
\end{proof}
By \cite[theorem 5.6]{Burgos3} and the claim \eqref{z7} we have,
\begin{equation}\label{z5}
\widehat{\mathrm{vol}}_\chi(\p^1_\Q,\overline{D}_P)=
\widehat{\mathrm{vol}}_\chi(\p^1_\Q,\overline{D}).
\end{equation}
Then from \eqref{z4}, \eqref{z6} and \eqref{z5} we conclude that
\[
h_{\overline{D}}(\p^1_\Q)\leq \widehat{\mathrm{vol}}_\chi(\p^1_\Q,\overline{D}).
\]
Thus we prove the  theorem \eqref{y1} for $\overline{D}$ smooth. Now let $\overline{D}$ be a  toric $DSP$ divisor.
By defintion,
 there exist
$\overline{D}_1=(D_1,\vc_1)$ and $\overline{D}_2=(D_2,\vc_2)$ two semipositive toric divisors such that
$D=D_1-D_2$ and $\vc=\vc_1\otimes \vc_2^{-1}$. For $i=1,2$, we choose $(\vc_{i,n})_{n\in \N}$ a sequence of smooth and
 semipositives metrics\footnote{Here semipositive, means that the associated first Chern form is semipositive}
   on $\overline{D}_i$
 converging uniformly to $\vc_i$. We set $\vc_n:=\vc_{1,n}\otimes \vc_{2,n}^{-1}$ and $\overline{D}_n:=(D,\vc_n)$ for
 any $n\in \N$.
  This is a sequence of smooth metrics
on $\mathcal{O}(D)$ converging uniformly to $\vc$.  The smooth case implies that
\begin{equation}\label{y2}
h_{\overline{D}_n}(\p^1_\Q)\leq \widehat{\mathrm{vol}}_\chi(\p^1_\Q,\overline{D}_n)\quad \forall\,n\in \N.
\end{equation}
By \cite{Zhang}, we  have that the LHS of \eqref{y2}  converges to
$h_{\overline{D}}(\p^1_\Q)$. Moreover, we can establish  that the roof functions
of a sequence of metrics converging uniformly
form a sequence of continuous functions on $\Delta_D$ converging uniformly. We deduce that the RHS of
\eqref{y2} converges
to  $\int_{\Delta_D}
\vartheta_{\overline{D}}d\mathrm{vol}_\R$ that is to  $\widehat{\mathrm{vol}}_\chi(\p^1_\Q,\overline{D})$, by
\cite[theorem 5.6]{Burgos3}.
We conclude that
\[
h_{\overline{D}}(\p^1_\Q)\leq \widehat{\mathrm{vol}}_\chi(\p^1_\Q,\overline{D}).
\]

This ends the proof of the theorem \eqref{y1}.

\section{Appendix}
Let $X$ be
compact manifold  of dimension $n$ and $L$  an ample
 holomorphic line bundle on  $X$.
Let $\phi$ be a weight
of a continuous hermitian metric $e^{-\phi}$
on $L$. When $\phi$ is smooth we define the Monge-Amp\`ere operator as
\[
\mathrm{MA}(\phi):=(dd^c\phi)^{\wedge n}.
\]
The equilibrium weight of $\phi$ is defined as:
\[
P_X\phi:=\sup{}^\ast\{\psi\,\text{psh weight on L},\, \psi\leq \phi\,\text{on}\, X \}
\]
where  $\ast$ denotes upper semicontinuous regularization. When $\phi$ is smooth then
$P_X\phi=\sup \{\psi\,\text{psh weight on}\, L, \, \psi\leq \phi\,\text{on}\, X \}$. It is known that
 $P_X\phi$
is a psh weight and the metric $e^{-P_X\phi}$ is continuous (see for instance \cite[\S 1.4, before (1.8)]{Berman}).
 By the theory of Bedford-Taylor,
 the Monge-Amp\`ere operator can be extended to locally bounded psh
weights $\phi$ (see \cite{Bedford}).\\

By \cite[proposition 2.10]{BermanBoucksom} we have
\begin{equation}\label{z1}
\int_X(P_X\phi-\phi)\mathrm{MA}(P_X\phi)=0.
\end{equation}

When $\dim(X)=1$, we have
\begin{equation}\label{z2}
\int_X(\phi-P_X\phi)(
\mathrm{MA}
(\phi)+\mathrm{MA} (P_X\phi))
\leq 0
\end{equation}
Indeed,
\begin{align*}
\frac{1}{2}\int_X(\phi-P_X\phi)(
\mathrm{MA}
(\phi)+\mathrm{MA} (P_X\phi))
&=\frac{1}{2}\int_X(\phi-P_X\phi)(dd^c \phi-dd^c P_X\phi)\quad\text{by}\,
\eqref{z1}\\
&=-\int_Xd(\phi-P_X\phi)\wedge d^c(\phi-P_X\phi)\\
&\leq 0.
\end{align*}

\bibliographystyle{plain}

\bibliography{biblio}

\end{document}